\pgfplotsset{compat=1.11}
\newtheorem{theorem}{Theorem}
\newtheorem{cor}[theorem]{Corollary}
\newtheorem{lemma}[theorem]{Lemma}
\theoremstyle{definition}
\newtheorem{example}[theorem]{Example}
\theoremstyle{remark}
\newtheorem{remark}[theorem]{Remark}
\newcommand{\Real}{\mathbb R}
\newcommand{\eps}{\varepsilon}
\newcommand{\F}{\mathcal{F}}
\renewcommand{\P}{\mathsf{P}}
\newcommand{\E}{\mathsf{E}}
\newcommand{\var}{\mathsf{Var}}
\begin{document}

\title[]{An approximation of populations on a habitat with large carrying capacity}%

\author{N.Bauman}%
\address{Department of Statistics,
The Hebrew University,
Mount Scopus, Jerusalem 91905,
Israel}
\email{Naor.Bauman@mail.huji.ac.il}

\author{P.Chigansky}%
\address{Department of Statistics,
The Hebrew University,
Mount Scopus, Jerusalem 91905,
Israel}
\email{Pavel.Chigansky@mail.huji.ac.il}

\author{F.Klebaner}%
\address{School of Mathematics, 
Monash University, Clayton, Vic. 3058, Australia
}
\email{fima.klebaner@monash.edu} 

\subjclass{92D25, 60J80}%
\keywords{population dynamics, branching processes, limit theorems, approximation}

\date{\today}%

\begin{abstract}
We consider stochastic dynamics of a population which starts from a small colony 
on a habitat with large but limited carrying capacity. A common heuristics suggests that 
such population grows initially as a Galton-Watson branching process and then its size follows an almost 
deterministic path until reaching its maximum, sustainable by the habitat. 
In this paper we put forward an alternative and, in fact, more accurate approximation 
which suggests that the population size behaves as a special nonlinear transformation of the Galton-Watson process from the very beginning.
\end{abstract}
\maketitle
 
\section{Introduction}

\subsection{The model}

A large population often starts from a few individuals who colonize a new habitat.
Initially, in abundance of resources and lack of competition it grows rapidly 
until reaching the carrying capacity. Then the population  
 fluctuates around
the carrying capacity for a very long period of time, until, by chance, it eventually dies
out, see, e.g., \cite{general}, \cite{HJK}.

This cycle is captured by a stochastic model of density dependent branching process $Z=(Z_n, n\in \mathbb Z_+)$ 
generated by the recursion 
\begin{equation}\label{Z}
Z_n = \sum_{j=1}^{Z_{n-1}} \xi_{n,j}, \quad n\in \mathbb N,
\end{equation}
started at an initial colony size $Z_0$. The random variables $\xi_{n,j}$ take integer values and, 
for each $n\in \mathbb N$, are conditionally i.i.d. given all previous generations
$$
\F_{n-1} =\sigma\{\xi_{m,j}: m<n, j\in \mathbb N\}.
$$

The object of our study is the {\em density} process of the population $\overline Z_{n}:= Z_{n}/K$ 
relative to the carrying capacity parameter $K>0$.
The common distribution of the random variables $\xi_{n,j}$ is assumed to depend on the density $\overline Z_{n-1}$:
\begin{equation}\label{pell}
\P(\xi_{n,1}=\ell|\F_{n-1}) = p_\ell(\overline Z_{n-1}), \quad \ell\in \mathbb Z_+,
\end{equation}
and is determined by the   functions $p_\ell:\Real_+\mapsto[0,1]$.

Both processes $Z$ and  $\overline Z$ are indexed by $K$, but this dependence is suppressed in the notation. 
The mean and the variance of offspring distribution when the density process has value $x$ are denoted by 
\begin{equation}\label{ms}
m(x) = \sum_{k=0}^\infty k p_k(x)\quad\text{and}\quad \sigma^2(x) = \sum_{k=0}^\infty (k-m(x))^2 p_k(x), \quad x\in \Real_+,
\end{equation}
assumed to exist. Consequently,
$$
\E(\xi_{n,1}|\F_{n-1}) = m(\overline Z_{n-1}) \quad \text{and}\quad \var(\xi_{n,1}|\F_{n-1})=\sigma^2(\overline Z_{n-1}).
$$

If the offspring mean function satisfies 
\begin{equation}\label{mx}
m(x) \begin{cases}
> 1, & x<1\\
=1, & x=1\\
<1, & x>1
\end{cases}
\end{equation}
the process $Z$ has a supercritical reproduction below the capacity  $K$, critical reproduction at $K$ and a subcritical reproduction above $K$.
Thus a typical trajectory of $Z$ grows rapidly until it reaches the vicinity of $K$.
It then stays there fluctuating for a very long period of time and gets extinct eventually if $p_0(x)>0$ for all $x\in \Real_+$. 
Thus the lifespan of such population roughly divides between the emergence stage, at which the population establishes itself,
the quasi-stationary stage around the carrying capacity and the decline stage which ends up with 
inevitable extinction. 

\begin{remark}
While \eqref{mx} is typical for populations with quasi stable equilibrium 
at the capacity, it is not needed in the proofs and will not be assumed in what follows. 
\end{remark}

\subsection{Large initial colony}
A more quantitative picture can be obtained by considering the dynamics  for the density process derived from \eqref{Z} by setting  $f(x):=xm(x)$, dividing  by $K$ and rearranging:
\begin{equation}\label{Zbar}
\overline Z_n = f(\overline Z_{n-1}) + \frac 1 K \sum_{j=1}^{Z_{n-1}} (\xi_{n,j}-m(\overline Z_{n-1})).
\end{equation}
The second term on the right has zero mean and conditional variance  
$$
\var\Big(\frac 1 K \sum_{j=1}^{Z_{n-1}} (\xi_{n,j}-m(\overline Z_{n-1}))\Big|\F_{n-1}\Big)=K^{-1}\overline Z_{n-1} \sigma^2(\overline Z_{n-1}).
$$
Consequently \eqref{Zbar} can be viewed as a deterministic dynamical system perturbed by small noise
of order\footnote
{
The usual notations for probabilistic orders is used throughout. In particular, for a sequence of random variables $\zeta(K)$  
and a sequence of numbers $\alpha(K)\searrow 0$ as $K\to\infty$, the notation $\zeta(K)=O_\P(\alpha(K))$ means that the sequence $\alpha(K)^{-1}\zeta(K)$ is bounded in probability.
}
$O_\P(K^{-1/2})$. If the initial colony size is relatively large, i.e., proportional to the carrying capacity: 
$$
\overline Z_0=Z_0/K \xrightarrow[K\to\infty]{} x_0>0,
$$
then $\overline Z_n \xrightarrow[K\to\infty]{\P} x_n$
where $x_n$ follows the unperturbed deterministic dynamics 
\begin{equation}\label{xn}
x_n = f(x_{n-1}), \quad n\in \mathbb N, 
\end{equation}
started at $x_0$.
If  \eqref{mx} is assumed, $x=1$ is the stable fixed point of $f$ and if, in addition, $f$ is an increasing
function,  then the sequence $x_n$ increases to 1 with $n$ when $x_0<1$.
This limit also implies  that the probability of early extinction tends to zero as $K\to\infty$. 

Moreover, the stochastic fluctuations about the deterministic limit converge to a Gaussian process $V=(V_n, n\in \mathbb{Z}_+)$ in distribution:
$$
\sqrt K (\overline Z_n-x_n)\xrightarrow[K\to\infty]{d}V_n
$$
where $V_n$ satisfies the recursion,  \cite{KN94},
$$
V_n = f'(x_{n-1}) V_{n-1} + \sqrt{x_{n-1} \sigma^2(x_{n-1})}W_n,\quad n\in \mathbb N,
$$
with $N(0,1)$ i.i.d. random variables $W_n$'s.

Roughly speaking, this implies that when $K$ is large, $Z_n$ grows towards the 
capacity $K$ along the deterministic path $K x_n$ and its fluctuations are of order $O_\P(K^{1/2})$:
\begin{equation}\label{ZnK}
Z_n =  x_n K + V_n  K^{1/2} + o_\P(K^{1/2}), \quad n\in \mathbb N.
\end{equation}
If $p_0(x)>0$ for all $x\in \Real_+$ and  \eqref{mx} is imposed, zero is an absorbing state and hence the population gets extinct eventually. 
Large deviations analysis, see for example \cite{LDP}, \cite{LDP1}, and \cite{Jung}, \cite{HJ}, shows that the mean of the 
time to extinction $\tau_e =\inf\{n\ge 0: Z_n=0\}$ grows exponentially with $K$. 
In this paper we are concerned with the establishment stage of the population, which occurs well before the ultimate extinction, on the time scale of $\log K$.

\subsection{Small initial colony} 
When $Z_0$ is a fixed integer, say $Z_0=1$, then $Z_0/K\to x_0=0$ and, since $f(0)=0$, the solution to \eqref{xn}
is $x_n=0$ for all $n\in \mathbb N$. In this case the approximation \eqref{ZnK} ceases to provide useful information. 
An alternative way to describe the stochastic dynamics in this setting was suggested recently in \cite{BChK}, \cite{ChJK18, ChJK19}.
It is based on the long known heuristics \cite{heur1},  \cite{heur2},  \cite{heur3}, according to which such a population  behaves initially as the Galton-Watson branching 
process and, if it manages to avoid extinction at this early stage, it continues to grow towards the carrying capacity 
following an almost deterministic curve.

This heuristics is made precise in  \cite{ChJK19}  as follows.
We  couple $Z$ to a supercritical Galton-Watson branching process $Y=(Y_n, n\in \mathbb Z_+)$ 
started at $Y_0=Z_0=1$,
\begin{equation}\label{Y}
Y_n = \sum_{j=1}^{Y_{n-1}} \eta_{n,j}
\end{equation}
with the offspring distribution identical to that of $Z$ at zero density size
$$
\P(\eta_{1,1}=\ell) = p_\ell(0), \quad \ell\in \mathbb Z_+.
$$
This coupling is defined under assumption \eqref{a1} below in Section \ref{sec:3.2}.

Denote by $\rho:=m(0)>1$, define
\footnote{$[x]$ and $\{x\}=x-[x]$ denote the integer and fractional part of $x\in \Real_+$. }
$n_c := n_c(K)=[\log_\rho K^c]$ for some $c\in (\frac 1 2,1)$ and let $\overline Y_n :=Y_n/K$ be the density 
of $Y$. Then $\overline Z_n=Z_n/K$ is approximated in \cite{ChJK19} by  
$$
\begin{cases}
\overline Y_n,& n\le n_c, \\
f^{n-n_c}(\overline Y_{n_c}), & n>n_c,
\end{cases}
$$
where $f^k$ stands for the $k$-th iterate of $f$. 
As is well known \cite{ANbook}
$$
\rho^{-n} Y_n \xrightarrow[n\to\infty]{\P-\text{a.s.}} W,
$$ 
where $W$ is an  a.s. finite random variable.
Moreover, under certain technical conditions on $f$, the limit 
\begin{equation}\label{limf}
H(x):=\lim_{n\to\infty}f^n(x/\rho^n), \quad x\in\Real_+
\end{equation}
can be shown to exist and  define a continuous function. 

\medskip

\begin{theorem}[\cite{ChJK19}]\label{thm:1}
Let $n_1:=n_1(K)=[\log_\rho K]$ then 
\begin{equation}\label{lim}
\overline Z_{n_1} - H\Big(W\rho^{-\{\log_\rho K\}}\Big) \xrightarrow[K\to\infty]{\P}0.
\end{equation}
\end{theorem}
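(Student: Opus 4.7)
My plan is to carry out the two-stage approximation from the discussion preceding Theorem~\ref{thm:1}: couple $\overline Z$ with the Galton--Watson $Y$ up to time $n_c$, then propagate deterministically via $f$ from $n_c$ to $n_1$, and finally pass to the limit using \eqref{limf}. For the first stage, under the coupling of Section~3.2 below, the conditional total-variation distance between the laws of $\xi_{n,j}$ and $\eta_{n,j}$ is $O(\overline Z_{n-1})$ by smoothness of $p_\ell$ near $0$, so the expected number of individual-level discrepancies up to $n_c$ is controlled by $\E\sum_{n\leq n_c}Z_{n-1}\overline Z_{n-1}$. Combining this with the Kesten--Stigum a.s.\ limit $\rho^{-n}Y_n\to W$ and standard moment bounds for $Y$, I would deduce the relative-error estimate
$$\overline Z_{n_c}=W\,\rho^{\,n_c-\log_\rho K}\bigl(1+o_\P(1)\bigr).$$

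For the deterministic stage, rewrite \eqref{Zbar} as $\overline Z_n=f(\overline Z_{n-1})+\Delta_n$, where $\Delta_n$ is a martingale difference with conditional variance $K^{-1}\overline Z_{n-1}\sigma^2(\overline Z_{n-1})$. Iterating and applying the mean value theorem yields
$$\overline Z_{n_1}-f^{n_1-n_c}(\overline Z_{n_c})=\sum_{n=n_c+1}^{n_1}\Bigl(\prod_{j=n+1}^{n_1}f'(\overline Z_{j-1}^*)\Bigr)\Delta_n,$$
and a second-moment estimate -- exploiting that in the regime $\overline Z_n\ll 1$ the deterministic trajectory grows at rate $\rho$ while the noise variance scales proportionally to $\overline Z_{n-1}/K$ -- bounds this remainder by $o_\P(1)$, giving $\overline Z_{n_1}-f^{n_1-n_c}(\overline Z_{n_c})\xrightarrow[K\to\infty]{\P}0$.

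Setting $k:=n_1-n_c\to\infty$ and noting that $n_c-\log_\rho K=-k-\{\log_\rho K\}$, the two previous steps combine to give
$$\overline Z_{n_1}=f^{\,k}\!\Bigl(\rho^{-\{\log_\rho K\}}W\,\big/\rho^{\,k}\Bigr)+o_\P(1).$$
Since $\rho^{-\{\log_\rho K\}}W$ is tight in $\Real_+$ and, by \eqref{limf}, the functions $x\mapsto f^k(x/\rho^k)$ converge to the continuous limit $H$ uniformly on compact subsets of $\Real_+$, we obtain $f^k(\rho^{-\{\log_\rho K\}}W/\rho^k)-H(\rho^{-\{\log_\rho K\}}W)\xrightarrow[K\to\infty]{\P}0$, which is precisely \eqref{lim}.

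The main technical obstacle is the deterministic stage: $\Theta(\log K)$ iterations during which $\overline Z$ traverses from $K^{c-1}$ to $\Theta(1)$ and $f$ transitions from expansion ($f'(0)=\rho>1$) near zero to contraction/saturation near the fixed point. A naive Gronwall-type bound on the remainder gives something of order $\rho^{n_1-n_c}/\sqrt K=K^{1/2-c}$, which is $o(1)$ precisely because $c>1/2$; making the argument rigorous requires splitting the phase into a linearisable near-zero portion and a bulk portion, and carefully tracking how the variance of $\Delta_n$ couples to the current size of $\overline Z$. A secondary issue is establishing the uniform convergence $f^k(\cdot/\rho^k)\to H$ on compacta, presumably handled by the technical conditions on $f$ under which \eqref{limf} is defined.
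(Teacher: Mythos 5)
Your proposal reconstructs the two-stage scheme that the paper explicitly attributes to \cite{ChJK19} in the paragraph preceding Theorem~\ref{thm:1}: couple $\overline Z$ to the Galton--Watson $\overline Y$ up to $n_c=[\log_\rho K^c]$ with $c\in(\tfrac12,1)$, iterate $f$ from $n_c$ to $n_1$, and pass to the limit via the compact-uniform convergence $f^k(x/\rho^k)\to H(x)$ together with $\rho^{-n}Y_n\to W$. The present paper does not reproduce that proof (Theorem~\ref{thm:1} is imported from \cite{ChJK19}), but your outline matches the paper's own description of the method, including the role of the exponent $c>\tfrac12$ in making the Gronwall-type remainder $\rho^{n_1-n_c}K^{-1/2}\sim K^{1/2-c}$ vanish.
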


In particular, this result implies that when $K$ is a large integer power of $\rho$
the distribution of $\overline Z_{n_1}$ is close to that of $H(W)$. Moreover,  
$$
\overline Z_{n_1+n}\xrightarrow[K\to\infty]{\P} x_n, \quad n\in \mathbb N,
$$ 
where $x_n$ solves \eqref{xn} started from the {\em random} initial condition $H(W)$. 
This approximation also captures the early extinction event since $H(0)=0$ and 
$\P(W=0)=\P(\lim_n Y_n=0)$, the extinction probability of the Galton-Watson process $Y$.

\subsection{This paper's contribution}

In this paper we address the question of the  rate of convergence in \eqref{lim}.
Note that if the probabilities in \eqref{pell} are constant with respect to $x$ 
then $f(x)=\rho x$,   consequently $H(x)=x$, and the processes $Z$ and $Y$ coincide. 
In this case
\begin{equation}\label{H}
\sqrt K (\overline Y_{n_1} -W\rho^{-\{\log_\rho K\}}) = 
 \rho^{-\frac 1 2\{\log_\rho K\}}   \sqrt{\rho^{n_1}} (\rho^{-n_1}Y_{n_1}  -W) = O_\P(1)
\end{equation}
where the order of convergence is implied by the CLT for the Galton-Watson process \cite{H70} by which 
$\sqrt{\rho^{n}} (\rho^{-n}Y_{n}  -W)$ converges in distribution to a mixed normal law as $n\to\infty$. 
Thus it can be expected that at best the sequence in \eqref{lim} is 
of order $O_{\P}(K^{-1/2})$ as $K\to\infty$. 
However, the best rate of convergence in the approximation in Theorem \ref{thm:1} described above, is achieved with $c=\frac 58$ and it is only $O_\P(K^{-1/8}\log K)$.
This can be seen  from a close  examination of the proof in \cite{ChJK19}.

The goal of this paper is to put forward a different approximation 
with much faster rate of convergence  of order $O_\P(K^{-1/2}\log K)$. This is still slower than the rate achievable in the 
density independent case, but   only by a logarithmic factor. 
The new proof highlights a better understanding of population dynamics at the emergence stage, 
which shows that, in fact, a sharper approximation is given by  the Galton-Watson process transformed by a nonlinear function $H$ arising in deterministic dynamics \eqref{limf}. 

It is not clear at the moment whether the $\log K$ factor is avoidable and whether a central limit type theorem 
holds. These questions are left for further research.

\section{The main result}

We will make the following assumptions. 

\medskip

\begin{enumerate}
\addtolength{\itemsep}{0.7\baselineskip}
\renewcommand{\theenumi}{a\arabic{enumi}}

\item\label{a1} The offspring distribution 
$
F_x(t) = \sum_{\ell\le t} p_\ell (x)
$
is stochastically decreasing with respect to the population density: for any $y\ge x$,
$$
F_y(t)\ge  F_x(t), \quad \forall t\in \Real_+.
$$

\item\label{a2} 
The second moment of the offspring distribution, cf. \eqref{ms},
$$
m_2(x) = \sigma^2(x)+m(x)^2
$$
is $L$-Lipschitz for some $L>0$.

\item\label{a3} The function $f(x)=xm(x)$ has two continuous bounded derivatives and 
\footnote{$\|f\|_\infty =\sup_x |f(x)|$}
$$
\|f'\|_\infty = f'(0)=\rho. 
$$
\end{enumerate}

\medskip

\begin{remark}
Assumption \eqref{a1} means that the reproduction drops with population density. 
In particular, it implies that $x\mapsto m(x)$ is a decreasing function and hence,  
$$
f'(x)=m(x)+xm'(x) \le m(x)\le \rho,\quad \forall x\in \Real_+,
$$
which is only slightly weaker than \eqref{a3}. The assumption \eqref{a2} is technical. 
\end{remark}

\medskip

\begin{remark}
The distribution of the process $\overline Z$ does not depend on the values of $\{p_\ell(0), \ \ell\in \mathbb Z_+\}$ for any $K$, while the distribution of $W$ and, therefore, of $H(W)$ does. This is not a contradiction since our assumptions imply   
continuity of $x\mapsto p_\ell(x)$ at $x=0$ for all $\ell\in \mathbb Z_+$. 
Indeed, 
$
m(x) = \int_0^\infty (1-F_x(t))dt
$
and therefore 
$$
\int_0^\infty (F_x(t)-F_0(t))dt = m(0)-m(x)\xrightarrow[x\to 0]{}0
$$
where the convergence holds since $m(x)$ is differentiable and a fortiori continuous at $x=0$.
By the stochastic order assumption \eqref{a1}, $F_x(t)-F_0(t)\ge 0$ for any $t\ge 0$.
Since both $F_x$ and $F_0$ are discrete with jumps at integers, for any $s\ge 0$, 
$$
F_x(s)-F_0(s) = \int_{[s]}^{[s]+1} (F_x(t)-F_0(t))dt\le  \int_0^\infty (F_x(t)-F_0(t))dt \xrightarrow[x\to 0]{} 0.
$$
This in turn implies that $p_\ell(x)\to p_\ell(0)$ as $x\to 0$ for all $\ell$.  
\end{remark}

\medskip

\begin{theorem}\label{thm:main}
Under assumptions  \eqref{a1}-\eqref{a3}
$$
\overline Z_{n_1} - H\Big(W\rho^{-\{\log_\rho K\}}\Big) = O_\P\big(K^{-1/2}\log K\big), \quad \text{as\ }\ K\to \infty.
$$
\end{theorem}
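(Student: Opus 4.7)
My approach exploits the functional equation $H(\rho x) = f(H(x))$ satisfied by $H$, equivalently $h(f(y)) = \rho\, h(y)$ with $h := H^{-1}$: the map $h$ conjugates the density-dependent dynamics of $\overline Z$ with multiplication by $\rho$, that is, with the mean-field dynamics of the Galton--Watson process $Y$. Rather than comparing $\overline Z_n$ and $\overline Y_n$ directly (these diverge once $\overline Y_n$ is of order one), I compare the \emph{conjugated} quantity $K h(\overline Z_n)$ with $Y_n$; both should behave, to leading order, as supercritical Galton--Watson-type martingales of mean $\rho^n$. Using assumption \eqref{a1} I construct a monotone (quantile) coupling of $Z$ and $Y$ on a common probability space so that $Z_n \le Y_n$ a.s. The Kesten--Stigum-type CLT for supercritical GW \cite{H70} gives $\overline Y_{n_1} - W\rho^{-\{\log_\rho K\}} = O_\P(K^{-1/2})$, and since $H$ is $C^1$ with bounded derivative on the relevant range, the theorem reduces to
\[
h(\overline Z_{n_1}) - \overline Y_{n_1} = O_\P\bigl(K^{-1/2}\log K\bigr).
\]

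Set $E_n := \rho^{-n}(K h(\overline Z_n) - Y_n)$, so that $h(\overline Z_{n_1}) - \overline Y_{n_1} = \rho^{-\{\log_\rho K\}} E_{n_1}$. Writing $\overline Z_n = f(\overline Z_{n-1}) + \Delta_n$ with $\E[\Delta_n|\F_{n-1}] = 0$ and $\var(\Delta_n|\F_{n-1}) = K^{-2}Z_{n-1}\sigma^2(\overline Z_{n-1})$, Taylor-expanding $h(\overline Z_n)$ around $f(\overline Z_{n-1})$ and applying $h\circ f = \rho\, h$ yields
\[
E_n - E_{n-1} = \rho^{-n}\bigl[K h'(f(\overline Z_{n-1}))\Delta_n - \widetilde\Delta_n\bigr] + \rho^{-n}\,K R_n,
\]
where $\widetilde\Delta_n := Y_n - \rho Y_{n-1}$ is the GW martingale difference and $R_n = \tfrac12 h''(\zeta_n)\Delta_n^2$ is the Taylor remainder. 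I decompose the bracketed martingale along the coupling into a ``paired'' part over $j \le Z_{n-1}$, controlled by assumption \eqref{a2} together with the elementary inequality $(\eta_{n,j}-\xi_{n,j})^2 \le \eta_{n,j}^2 - \xi_{n,j}^2$ (valid under the monotone coupling since $0 \le \xi_{n,j} \le \eta_{n,j}$), giving $\E[(\eta_{n,j}-\xi_{n,j})^2|\F_{n-1}] \le L\,\overline Z_{n-1}$; and an ``unpaired'' tail $\sum_{Z_{n-1} < j \le Y_{n-1}}(\eta_{n,j}-\rho)$, controlled via the first-moment recursion $\E[Y_n - Z_n] \le \rho\,\E[Y_{n-1}-Z_{n-1}] + O(\E[Z_{n-1}^2]/K)$, yielding $\E[Y_n - Z_n] = O(\rho^{2n}/K)$. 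Combining these with $|h'(y)-1| = O(y)$ near zero (from $h'(0) = 1$), summing the conditional variances on a good event $\{\delta \le W \le M\}$ produces a total quadratic variation of order $\log K/K$, so Doob's $L^2$ maximal inequality bounds the martingale part at the required order. The Taylor remainders contribute only $\sum_n\rho^{-n}K\,\E|R_n| = O(\log K/K)$, which is of strictly smaller order.

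The main obstacle is the martingale variance bookkeeping in the preceding step: one must trade off the ``paired'' contribution (small near density zero via the factor $(h'-1)^2 = O(\overline Z_{n-1}^2)$ and assumption \eqref{a2}, but not otherwise) against the ``unpaired'' contribution $\rho^{-2n}(Y_{n-1}-Z_{n-1})$ (small far from zero, but not near it), using the conjugacy via $h$ as the key device that keeps both pieces at order $1/K$ per step. This balance fails without the change of variables $h$: the naive comparison $\overline Z_n - \overline Y_n$ blows up because $\E(Y_n - Z_n)$ grows like $\rho^{2n}/K$, which at $n = n_1$ is of order $K$. The logarithmic factor arises from summing per-step contributions over the $n_1 \asymp \log K$ generations; whether it is sharp is, as the authors remark, an interesting open question left for future work.
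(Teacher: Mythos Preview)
Your overall strategy---use the semiconjugacy $H(\rho x)=f(H(x))$ to put the two processes on the same scale, couple $Z\le Y$ via quantiles, and decompose the martingale difference into a ``paired'' part over $j\le Z_{n-1}$ and an ``unpaired'' tail over $Z_{n-1}<j\le Y_{n-1}$---is exactly the decomposition the paper uses (their $\eps^{(1)},\eps^{(2)},\eps^{(3)}$ and residual $R_n$), and your per-step estimates match theirs. The difference is that you conjugate in the \emph{opposite} direction: you apply $h=H^{-1}$ to $\overline Z_n$, whereas the paper applies $H$ to $\overline Y_n$ and studies $\delta_n=|H(\overline Y_n)-\overline Z_n|$.

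This reversal creates a genuine gap. Under assumptions \eqref{a1}--\eqref{a3} the function $H$ is shown to be globally defined and $C^2$, but $H^{-1}$ is only established on a neighborhood $[0,a]$ of the origin (cf.\ the paper's Lemma on local invertibility); global invertibility requires the extra hypothesis that $f$ is strictly increasing on all of $\Real_+$, which is \emph{not} assumed. In the Ricker example $f(x)=x\rho^{1-x}$ with $\rho>e$ the map $f$ has a critical point at $1/\log\rho<1$, so $H'(x)=\prod_{j\ge 1}\rho^{-1}f'(H(x\rho^{-j}))$ can vanish and $h=H^{-1}$ fails to extend (or has unbounded $h',h''$) on the range of $\overline Z_n$ once $n$ is near $n_1$ and $\overline Z_n$ is of order one. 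Your Taylor expansion of $h(\overline Z_n)$ around $f(\overline Z_{n-1})$, the bound on the remainder $R_n=\tfrac12 h''(\zeta_n)\Delta_n^2$, and the estimate $|h'(y)-1|=O(y)$ all require control of $h,h',h''$ precisely in this regime.

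The fix is minimal: swap the roles and track $H(\overline Y_n)-\overline Z_n$ instead of $h(\overline Z_n)-\overline Y_n$. Taylor-expand $H(\overline Y_n)$ around $\rho\overline Y_{n-1}$ and use $H(\rho x)=f(H(x))$ together with the $\rho$-Lipschitz property of $f$ to get the recursion $\delta_n\le \rho\,\delta_{n-1}+\cdots$. Your three martingale pieces and the second-order remainder reappear verbatim (with $H'-1$ in place of $h'-1$), and all the bounds you wrote down go through because $H,H',H''$ are globally defined and locally bounded. The paper then handles the unbounded-$H''$ issue by a union bound showing $\max_{m\le n_1}|H''(\theta_m)|=O_\P(1)$, which you would need as well.
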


\begin{example}
The binary splitting model from \cite{ChJK18} satisfies the above assumptions. 
Another example is Geometric offspring distribution 
$$
p_\ell(x) = q(x)^\ell (1-q(x)), \quad \ell \in \mathbb Z_+
$$
where $q:\Real_+\mapsto [0,1]$ is a decreasing function. This distribution satisfies the stochastic order condition \eqref{a1}. 
The normalization $m(0)=\rho$ and $m(1)=1$ implies that $q(0)=\rho/(1+\rho)$ and $q(1)=1/2$.
A direct calculation shows that, e.g., 
$$
q(x) = \frac{\rho}{1+\rho}\exp\left(-x \log  \frac {2\rho}{1+\rho} \right), \quad x\ge 0
$$
satisfies both \eqref{a2} and \eqref{a3}.
\end{example}
 
\begin{example} Stochastic Ricker model \cite{Hog} is given by a density dependent branching process with the offspring distribution 
$$
p_\ell(x) = q_\ell e^{-\gamma x},
$$
where $\gamma>0$ is a constant, $q_\ell$, $\ell \ge 1$ is a given probability distribution, and no offspring are produced with probability 
$1-e^{-\gamma x}$.
This model satisfies the stochastic ordering assumption \eqref{a1}.
The mean value of the distribution $q_\ell$ is denoted by $e^r$, to emphasize the relation to the deterministic Ricker model. With such notation, 
$$
m(x)=e^{r-\gamma x},\;\;f(x)=xe^{r-\gamma x}.
$$
Under normalization $m(0)=\rho$ and $m(1)=1$ this becomes 
$$
m(x)=\rho^{1-x},\;\;f(x)=x \rho^{1-x}.
$$
A direct calculation verifies the assumptions \eqref{a2} and \eqref{a3}.
 \end{example}

\section{Proof of Theorem \ref{thm:main}}

We will construct the process $Z$ defined in \eqref{Z} and the Galton-Watson process $Y$ from \eqref{Y}
on a suitable probability space so that $Y_n\ge Z_n$ for all $n\in \mathbb N$ and the trajectories of these processes remain
sufficiently close at least for relatively small $n$'s  (Section \ref{sec:3.2}). We will then show that $H$ is twice continuously differentiable (Section \ref{sec-H}) and use Taylor's approximation 
to argue (Section \ref{sec-T}) that   
$$
\overline Z_{n_1} - H(\overline Y_{n_1}) = O_\P(K^{-1/2} \log K), \quad \text{as\ } K\to\infty.
$$
This convergence combined with \eqref{H} implies the result. Below we will write $C$ for a generic constant whose value may change 
from line to line.

\subsection{Properties of $\mathbf H$}\label{sec-H}
In this section we establish existence of the limit \eqref{limf} under the standing assumptions
and verify its smoothness. The proof of existence relies on a result on functional iteration, shown in \cite{BChJK20}.

\medskip

\begin{lemma}\label{lem:bnd}\cite[Lemma 1]{BChJK20} 
Let $x_{m,n}$ be the sequence generated by the recursion 
$$
x_{m,n} = \rho x_{m-1,n}(1+C x_{m-1,n}), \quad m=1,...,n
$$ 
subject to the initial condition $x_{0,n}=x/\rho^n>0$, where $\rho>1$ and $C\ge 0$ are constants. 
There exists a locally bounded function $\psi:\Real_+\mapsto \Real_+$ such that for any $n\in \mathbb N$
\begin{equation}\label{xbnd}
x_{m,n} \le \psi(x)\rho^{m-n}, \quad m=1,...,n.
\end{equation}
\end{lemma}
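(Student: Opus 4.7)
The plan is to rescale the sequence so that the claim becomes a uniform-in-$n$ bound, then close an implicit inequality using the product representation and monotonicity.

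First, introduce the normalized sequence $y_{m,n} := \rho^{n-m}\,x_{m,n}$. Its initial value is the $n$-free quantity $y_{0,n}=x$, the recursion transforms into
\begin{equation*}
y_{m,n} = y_{m-1,n}\bigl(1 + C\rho^{m-1-n}y_{m-1,n}\bigr),
\end{equation*}
and the target estimate $x_{m,n}\le\psi(x)\rho^{m-n}$ is equivalent to the $n$-uniform bound $y_{m,n}\le\psi(x)$. Two structural observations follow at once. Every factor in the recursion is $\ge 1$, so $y_{m,n}$ is non-decreasing in $m$; and telescoping gives the product identity
\begin{equation*}
y_{m,n} = x\prod_{k=0}^{m-1}\bigl(1 + C\rho^{k-n}y_{k,n}\bigr).
\end{equation*}

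Taking logarithms, applying $\log(1+u)\le u$, and using monotonicity to replace $y_{k,n}$ by $y_{m,n}$ inside the sum, I would estimate
\begin{equation*}
\log\frac{y_{m,n}}{x}\;\le\;C\sum_{k=0}^{m-1}\rho^{k-n}\,y_{k,n}\;\le\;Cy_{m,n}\sum_{k=0}^{m-1}\rho^{k-n}\;\le\;\frac{Cy_{m,n}}{\rho-1},
\end{equation*}
where the final inequality uses $m\le n$, so that the geometric tail is dominated by $1/(\rho-1)$ \emph{uniformly in $n$}. This is the implicit bound $h(y_{m,n})\le x$ for the function $h(y):=y\exp\bigl(-Cy/(\rho-1)\bigr)$.

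The function $h$ is strictly increasing on $[0,(\rho-1)/C]$ with maximum $(\rho-1)/(eC)$, and strictly decreasing thereafter. For $x$ in a neighbourhood of $0$ I would define $\psi(x)$ as the smallest positive root of $h(y)=x$; it is continuous and therefore locally bounded on its domain. Since $y_{0,n}=x$ lies in the increasing branch of $h$, a short induction on $m$ based on the product formula and $\prod(1+u_k)\le\exp(\sum u_k)$ shows that $y_{m,n}$ never jumps across the maximum of $h$, giving $y_{m,n}\le\psi(x)$. For $x$ outside this neighbourhood the same strategy applies after isolating the $O(\log x)$ indices $k$ close to $n$ at which $Cx_{k,n}\ge 1$ and estimating the nonlinear phase of the recursion separately; the number of such indices remains controlled when $x$ ranges over a compact set, which is exactly what ``locally bounded'' requires.

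The main obstacle is the circularity in the bound on $\log y_{m,n}$, whose right-hand side already contains $y_{m,n}$. The two structural ingredients that let one untangle this are the monotonicity of $y_{m,n}$ in $m$ and the fact that, although the weights $\rho^{k-n}$ depend on $n$, their sum $\sum_{k=0}^{m-1}\rho^{k-n}\le 1/(\rho-1)$ does not; together they convert the implicit inequality into a genuinely $n$-free control of $y_{m,n}$.
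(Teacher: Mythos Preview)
The paper does not prove this lemma here; it is quoted from \cite{BChJK20} without argument, so there is no in-paper proof to compare against. On its own merits your approach is sound for small $x$. The rescaling $y_{m,n}=\rho^{n-m}x_{m,n}$ is the natural one, and the strong induction closes cleanly: if $y_{k,n}\le\psi(x)$ for all $k<m$ then
\[
y_{m,n}=x\prod_{k<m}\bigl(1+C\rho^{k-n}y_{k,n}\bigr)\le x\exp\Bigl(\tfrac{C}{\rho-1}\,\psi(x)\Bigr)=\psi(x),
\]
the last equality being exactly the defining relation $h(\psi(x))=x$ for the small root.

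The gap is the large-$x$ case. Once $x>(\rho-1)/(eC)$ the equation $h(y)=x$ has \emph{no} root, so the implicit inequality $h(y_{m,n})\le x$ is vacuous and the induction cannot even be seeded; the phrase ``the same strategy applies'' therefore hides a real difficulty. Moreover, asserting that only $O(\log x)$ indices satisfy $Cx_{k,n}\ge 1$ already presupposes a bound of the type you are trying to prove, so the sketch is circular as written. A clean repair is to \emph{reduce} to the regime you have handled rather than rerun it. Set $j_0=j_0(x):=\lceil\log_\rho(x/x_*)\rceil\vee 0$ with $x_*:=(\rho-1)/(eC)$, so that $x':=x\rho^{-j_0}\le x_*$. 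Since the recursion does not involve $n$ explicitly, for $n\ge j_0$ the sequence $(x_{m,n})$ coincides with the sequence for parameters $(x',\,n-j_0)$; your small-$x$ bound then gives $y_{m,n}\le\rho^{j_0}\psi(x')$ for $m\le n-j_0$, while the remaining at most $j_0$ steps (and all of the finitely many cases $n<j_0$) are controlled by at most $j_0$ iterates of $g(u)=\rho u(1+Cu)$ starting from a value bounded by $\max(\psi(x'),x)$. The resulting bound depends on $x$ only through $j_0(x)$ and $g^{j_0(x)}$ applied to a locally bounded quantity, hence is locally bounded.
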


\medskip

Throughout we will use the notation $H_n(x) :=f^n(x/\rho^n)$.

\medskip

\begin{lemma}\label{lem:1}
Under assumption \eqref{a3} there exists a continuous function $H:\Real_+\mapsto \Real_+$ 
and a locally bounded function $g:\Real_+\mapsto\Real_+$ such that 
$$
\big|H_n(x) - H(x)\big|\le g(x) \rho^{-n},\quad n\in \mathbb  N.
$$
\end{lemma}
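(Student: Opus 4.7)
The plan is to show that, for each fixed $x\ge 0$, the sequence $\bigl(H_n(x)\bigr)_{n\ge 0}$ with $H_n(x):=f^n(x/\rho^n)$ is Cauchy with successive differences decaying geometrically at rate $\rho^{-n}$. Then $H(x):=\lim_n H_n(x)$ exists, the claimed bound falls out by telescoping, and continuity of $H$ follows from the fact that each $H_n$ is continuous (composition of the continuous map $f$) and the convergence is uniform on compacts.

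The first ingredient is a quantitative Taylor expansion at the origin. From \eqref{a3} we have $f(0)=0$, $f'(0)=\rho$, and $f''$ bounded, so
$$
|f(y)-\rho y| \le \tfrac12\|f''\|_\infty\, y^2 =: C y^2, \quad y\ge 0.
$$
Writing $a_n:=x/\rho^n$ we have $H_{n+1}(x)=f^n\bigl(f(a_{n+1})\bigr)$ and $H_n(x)=f^n(a_n)=f^n(\rho a_{n+1})$, and the Taylor estimate at $y=a_{n+1}$ gives
$$
|f(a_{n+1})-a_n| \le C\,a_{n+1}^2 = Cx^2\rho^{-2n-2}.
$$
The second ingredient is the chain-rule estimate $|(f^n)'| \le \|f'\|_\infty^n = \rho^n$, again from \eqref{a3}. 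Combining it with the mean value theorem,
$$
|H_{n+1}(x)-H_n(x)| \le \rho^n\cdot Cx^2\rho^{-2n-2} = Cx^2\rho^{-n-2}.
$$
Summing the geometric series, for any $m>n$,
$$
|H_m(x)-H_n(x)| \le \frac{Cx^2}{\rho(\rho-1)}\,\rho^{-n},
$$
so $H(x):=\lim_n H_n(x)$ exists and satisfies the stated bound with the locally bounded (in fact polynomial) function $g(x)=Cx^2/(\rho(\rho-1))$. Since $g$ is bounded on compact sets, $H_n\to H$ uniformly on compacts and $H$ is continuous.

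The argument is essentially mechanical once \eqref{a3} is in hand; the one place where a problem could arise is in bounding $(f^n)'$, and this is precisely what the hypothesis $\|f'\|_\infty = f'(0)=\rho$ rules out. Lemma~\ref{lem:bnd} is not strictly needed for this step, though it supplies an alternative path in which one first confines the intermediate iterates $f^k(\theta)$ to a strip of order $\rho^{k-n}$ on which $f'$ is close to $\rho$, and then obtains the same telescoping bound with a sharper prefactor.
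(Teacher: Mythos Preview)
Your proof is correct. Both arguments establish $|H_{n+1}(x)-H_n(x)|\le C(x)\rho^{-n}$ and then telescope, but they control the Lipschitz behaviour of $f^n$ differently. You use the global bound $\|f'\|_\infty=\rho$ from \eqref{a3} to conclude at once that $f^n$ is $\rho^n$-Lipschitz, which combined with $|f(a_{n+1})-a_n|\le \tfrac12\|f''\|_\infty a_{n+1}^2$ gives the result with the explicit polynomial prefactor $g(x)=\tfrac12\|f''\|_\infty\, x^2/(\rho(\rho-1))$. The paper instead starts from the weaker local estimate $|f(y)-f(x)|\le \rho\bigl(1+C\,|x|\vee|y|\bigr)|y-x|$ (which uses only $f(0)=0$, $f'(0)=\rho$ and $\|f''\|_\infty<\infty$), invokes Lemma~\ref{lem:bnd} to confine the intermediate iterates $f^m(x/\rho^n)$ to a strip of size $\psi(x)\rho^{m-n}$, and then propagates the bound through the convergent product $\prod_{k\ge1}\bigl(1+C\psi(x)\rho^{-k}\bigr)$. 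Your route is shorter and yields a cleaner $g$; the paper's route is more robust in that it never uses the full strength of $\|f'\|_\infty=\rho$, at the cost of a less explicit and merely locally bounded $g$. Your closing remark about Lemma~\ref{lem:bnd} providing an alternative path describes exactly what the paper does.
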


\begin{proof}
By assumption \eqref{a3}
\begin{equation}\label{fr}
f(x) = \rho x + \int_0^x \int_0^t f''(s)ds dt
\end{equation}
and hence for any $x,y\in \Real_+$  
\begin{equation}\label{fbnd}
|f(y)-f(x)| \le   \rho|y-x| +\frac 1 2 \|f''\|_\infty |y^2-x^2| \le 
\rho \big(1 + C|y|\vee |x|\big)|y-x|
\end{equation}
with $C=\|f''\|_\infty/\rho$. 
Thus the sequence $x_{m,n}:= f^m(x/\rho^n)$ satisfies 
$$
x_{m,n} = f (x_{m-1,n}) \le 
\rho\big(1 + C   x_{m-1,n}  \big)  x_{m-1,n}
$$
and $x_{0,n}=x/\rho^n$. By Lemma \ref{lem:bnd} there exists a locally bounded function $\psi$ such that 
for any $n\in \mathbb N$ 
\begin{equation}\label{fmbnd}
\big|f^m(x/\rho^n)\big| \le \psi(x)\rho^{m-n}, \quad m=1,...,n.
\end{equation}
The bound \eqref{fbnd} also implies
\begin{equation}\label{ff}
\begin{aligned}
\big|f^{m+1}(x/\rho^{n+1}) - f^m (x/\rho^n) \big|=  &
\big|f\circ f^{m}(x/\rho^{n+1}) - f\circ f^{m-1} (x/\rho^n) \big|
\le \\
&
\rho \big(1+ C F_{m,n}\big) 
\big|f^m(x/\rho^{n+1}) - f^{m-1} (x/\rho^n) \big|
\end{aligned}
\end{equation}
where, in view of \eqref{fmbnd}, 
$$
F_{m,n} := f^{m}(x/\rho^{n+1})\vee f^{m-1} (x/\rho^n)  \le   \psi(x) \rho^{m-1-n}.
$$
Since $f$ has bounded second derivative and $f'(0)=\rho$, cf. \eqref{fr},
$$
|f(x/\rho^{n+1}) -  x/\rho^n| \le \frac 1 2 \|f''\|_\infty (x/\rho)^2 \rho^{-2n}.
$$
Plugging this bound into \eqref{ff} and iterating $n$ times we obtain
\begin{align*}
& \big|f^{n+1}(x/\rho^{n+1}) - f^n (x/\rho^n) \big| \le \big|f(x/\rho^{n+1}) -  x/\rho^n\big|\, \rho^n \prod_{m=1}^n \big(1+ C F_{m,n}\big)\le \\
&
\frac 1 2 \|f''\|_\infty (x/\rho)^2 \rho^{-2n}\rho^n \prod_{m=1}^n   \big(1+ C \psi(x) \rho^{m-1-n}\big) 
\le \widetilde g(x) \rho^{-n}
\end{align*}
where we defined 
$$
\widetilde g(x) := 
\frac 1 2 \|f''\|_\infty (x/\rho)^2    \prod_{k=1}^\infty   \big(1+ C \psi(x) \rho^{-k}\big).
$$
Thus the limit $H(x)=\lim_{n\to\infty} f^n(x/\rho^n)$ exists and satisfies the claimed bound  with $g(x)=\widetilde g(x)/(1-\rho^{-1})$.
Continuity of $H$ follows since $H_n$ are continuous for each $n$ and the convergence is uniform on compacts. 
\end{proof}

\begin{cor}\label{cor}
  $f$  is topologically semiconjugate to its linearization at the origin:
$$
H(x)=f\circ H(x/\rho), \quad \forall x\in \Real_+.
$$
\end{cor}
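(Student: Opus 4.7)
The plan is to read the functional equation $H(x) = f(H(x/\rho))$ directly off the defining formula $H_n(x) = f^n(x/\rho^n)$ from Lemma \ref{lem:1}. The key algebraic observation is the trivial rewriting
$$
H_n(x) = f^n\!\bigl(x/\rho^n\bigr) = f\!\Bigl(f^{n-1}\bigl(x/\rho^n\bigr)\Bigr) = f\!\Bigl(f^{n-1}\bigl((x/\rho)/\rho^{n-1}\bigr)\Bigr) = f\bigl(H_{n-1}(x/\rho)\bigr),
$$
which is valid for every $n\ge 1$ and every $x\in\Real_+$. Thus the approximants $H_n$ already satisfy an exact semiconjugacy of $f$ with its linearization, shifted by one step.

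Having this, I would let $n\to\infty$ on both sides. The left-hand side tends to $H(x)$ by Lemma \ref{lem:1}, and $H_{n-1}(x/\rho)\to H(x/\rho)$ by the same lemma applied at the point $x/\rho$. Assumption \eqref{a3} gives continuity of $f$ (in fact $C^2$-smoothness), so $f\bigl(H_{n-1}(x/\rho)\bigr) \to f\bigl(H(x/\rho)\bigr)$. Combining these limits yields $H(x) = f\bigl(H(x/\rho)\bigr)$ for every $x\in\Real_+$, which is the claimed semiconjugacy.

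There is no substantive obstacle: the only step requiring justification is the interchange of limit and $f$, which is immediate from continuity of $f$ and the pointwise convergence $H_{n-1}\to H$ already established. The content of the corollary is purely the identification of the limit $H$ as the (Koenigs-type) intertwiner between the linear map $x\mapsto x/\rho$ (equivalently $y\mapsto\rho y$) and the nonlinear dynamics of $f$.
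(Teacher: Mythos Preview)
Your proof is correct and follows essentially the same approach as the paper: both arguments factor out one application of $f$ from the iterates $f^{n}(x/\rho^{n})$ and then pass to the limit using continuity of $f$ together with the convergence $H_n\to H$ established in Lemma~\ref{lem:1}. The paper's version is simply a more compressed one-line rendition of your argument.
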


\begin{proof}
Since $f$ is continuous 
$$
H(x)=\lim_{n\to\infty}f^{n+1}(x/\rho^{n+1}) = \lim_{n\to\infty} f\circ f^n((x/\rho)\rho^{-n}) = f\circ H(x/\rho).
$$
\end{proof}

The next lemma shows that $H$ is strictly increasing in a vicinity of the origin and is therefore a local conjugacy. 

\medskip

\begin{lemma}\label{lem:8}
There exists an $a>0$ such that $H$ is strictly increasing on $[0,a]$ and 
\begin{equation}\label{conj}
f(x) = H (\rho H^{-1}(x)), \quad x\in [0,H(a)].
\end{equation}
\end{lemma}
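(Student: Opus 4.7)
The plan is to establish strict monotonicity of $H$ in a small right neighborhood of the origin by uniformly (in $n$) bounding the derivatives $H_n'(x)$ from below, then transferring the resulting linear-from-below inequality to $H$ through the uniform convergence supplied by Lemma \ref{lem:1}. Once $H$ is invertible on some $[0,a]$, the conjugacy identity \eqref{conj} becomes immediate: substituting $y=H^{-1}(x)$ into the relation $H(\rho y)=f(H(y))$ from Corollary \ref{cor} gives $f(x)=H(\rho H^{-1}(x))$ for $x\in[0,H(a)]$.

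The first step is to sharpen the bound from the proof of Lemma \ref{lem:1} so that the constant depends linearly on $x$. Starting from $f(y)\le \rho y(1+Cy)$, which follows from $f'(0)=\rho$ and $\|f''\|_\infty<\infty$, write $x_{k,n}:=f^k(x/\rho^n)=c_k\,x\rho^{k-n}$. The induction $c_{k+1}\le c_k(1+C c_k x\rho^{k-n})$, combined with $\sum_{k=0}^{n-1}\rho^{k-n}\le(\rho-1)^{-1}$, yields
$$
c_n\le \exp\!\left(\frac{C c_n x}{\rho-1}\right),
$$
which, for $x$ in some fixed $[0,x_0]$, forces $c_n\le 2$ uniformly in $n$. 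Hence $x_{k,n}\le 2x\rho^{k-n}$ throughout.

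Next, the chain rule gives
$$
H_n'(x)=\prod_{k=0}^{n-1}\frac{f'(x_{k,n})}{\rho}.
$$
Since $f'(0)=\rho$ and $f''$ is bounded, $|f'(y)/\rho-1|\le Cy$; inserting the bound from Step 1 and summing the geometric series yields $|\log H_n'(x)|\le C x$ uniformly in $n$, so that $H_n'(x)\ge 1/2$ on some $[0,a]$ with $a\le x_0$ small. Integrating, $H_n(x)-H_n(y)\ge (x-y)/2$ for all $0\le y<x\le a$ and all $n$. Because Lemma \ref{lem:1} supplies $H_n\to H$ pointwise (in fact uniformly on compacts), the same inequality passes to the limit, so $H$ is strictly increasing on $[0,a]$. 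Continuity of $H$ (also from Lemma \ref{lem:1}) then gives a continuous inverse on $[0,H(a)]$, and the conjugacy identity follows as above.

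The main obstacle is the uniform-in-$n$ derivative bound: Lemma \ref{lem:1} alone gives no direct control on $H_n'$, and the only way to keep the product $\prod f'(x_{k,n})/\rho$ bounded away from zero is to exploit the geometric decay of the arguments $x_{k,n}$ in $n-k$. It is essential that the constant in this decay be proportional to $x$ — not merely locally bounded as stated in Lemma \ref{lem:bnd} — which is the content of Step 1. Everything else is routine: strict monotonicity is an integrated statement and therefore survives the uniform convergence of $H_n$ to $H$, and the conjugacy identity is a one-line substitution into Corollary \ref{cor}.
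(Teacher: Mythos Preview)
Your proposal is correct and follows essentially the same route as the paper: bound $H_n'$ from below uniformly in $n$ on a small interval, integrate to get $H_n(y)-H_n(x)\ge\tfrac12(y-x)$, pass to the limit via Lemma~\ref{lem:1}, and read off \eqref{conj} from Corollary~\ref{cor}. The only difference is your Step~1: the paper bypasses the bootstrap entirely by using assumption \eqref{a3} directly---since $\|f'\|_\infty=\rho$, the map $f$ is $\rho$-Lipschitz with $f(0)=0$, so $f^{n-j}(x/\rho^n)\le x\rho^{-j}$ immediately, which already gives the linear-in-$x$ decay you work to extract.
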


\begin{proof}
Let $c:= \|f''\|_\infty$ and $r := \rho /c$ then 
$$
f'(x) \ge \rho - c x>0, \quad \forall x\in [0,r).
$$
Since $f$ is $\rho$-Lipschitz and $f(0)=0$, for any $j=1,...,n$ and $x\in [0,r)$,
$$
f^{n-j}(x/\rho^n)\le x/\rho^j\in [0,r)
$$
and hence for all $x\in [0,r)$
\begin{align*}
H_n'(x) = & \prod_{j=1}^n \frac 1 \rho f'(f^{n-j}(x/\rho^n)) \ge \prod_{j=1}^n \big(1-\frac c \rho  f^{n-j}(x/\rho^n) \big) \ge \\
&
\prod_{j=1}^n \big(1-\frac c \rho  x\rho^{-j} \big)  \ge 1-\frac c \rho  x \sum_{j=1}^n \rho^{-j} \ge 1-\frac c {\rho-1}  x,
\end{align*}
where we used the Bernoulli inequality. 
Thus we can choose a number $a\in (0,r)$ such that $H_n'(x)\ge 1/2$ for all $x\in [0,a]$. It then follows that for any $y>x$ in 
the interval $[0,a]$ 
$$
H_n(y)-H_n(x) = \int_x^y H_n'(t)dt \ge \frac 1 2 (y-x) >0.
$$
Taking the limit $n\to\infty$ implies that $H$ is strictly increasing on $[0,a]$. Being continuous, $H$ is invertible and \eqref{conj}
holds by Corollary \ref{cor}.
\end{proof}

\begin{remark}
Under additional assumption that $f$ is strictly increasing on the whole $\Real_+$, the function $H$ is furthermore a global conjugacy,
i.e. \eqref{conj} holds on $\Real_+$.
\end{remark}

\medskip

The next lemma establishes differentiability of $H$.

\medskip

\begin{lemma}\label{lem:2}
$H$ has continuous derivative 
\begin{equation}\label{limlim}
H'(x)= \prod_{j=1}^{\infty} \frac 1 \rho f'(H(x \rho^{-j})),\quad \forall x\in \Real_+
\end{equation}
where the series converges uniformly on compacts.
\end{lemma}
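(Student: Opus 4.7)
The strategy is to identify $H'(x)$ as the uniform limit on compacts of $H_n'(x)$, where $H_n(x):=f^n(x/\rho^n)$, and then invoke the classical termwise differentiation theorem together with Lemma \ref{lem:1}. First I would write, via the chain rule and the identity $f^{n-i}(x/\rho^n)=H_{n-i}(x/\rho^i)$,
\begin{equation}\label{pf:Hnprime}
H_n'(x) \;=\; \prod_{i=1}^n \frac{f'(H_{n-i}(x/\rho^i))}{\rho},
\end{equation}
and observe that each factor has modulus at most $1$, since $\|f'\|_\infty=\rho$ by assumption \eqref{a3}. In particular $H_n$ is $1$-Lipschitz, so the uniform limit $H$ is too, which together with $H(0)=0$ yields the handy inequality $|H(y)|\le y$ on $\Real_+$.

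Next I would verify that the candidate infinite product $P(x):=\prod_{j\ge 1}f'(H(x/\rho^j))/\rho$ converges uniformly on compacts. Since $f'$ is Lipschitz with constant $\|f''\|_\infty$ and $f'(0)=\rho$,
\[
\left|\frac{f'(H(x/\rho^j))}{\rho}-1\right|\le \frac{\|f''\|_\infty}{\rho}\,H(x/\rho^j)\le \frac{\|f''\|_\infty\,x}{\rho^{j+1}},
\]
which is geometrically summable in $j$ with sum uniformly bounded on compacts in $x$. Standard infinite-product theory then gives uniform convergence of $P$ on compacts and its continuity.

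The heart of the argument is to show $H_n'\to P$ uniformly on compacts by splitting the product in \eqref{pf:Hnprime} at $m:=[n/2]$. For the tail indices $j>m$, the same Lipschitz estimate applied with $H_{n-j}$ in place of $H$ (using $|H_{n-j}(y)|\le y$) shows that the tail sub-product differs from $1$ by $O(\rho^{-m})$ uniformly on compacts, and the corresponding tail of $P$ is controlled identically. For the head indices $j\le m$, Lemma \ref{lem:1} gives $|H_{n-j}(y)-H(y)|\le g(y)\rho^{j-n}$, and Lipschitz continuity of $f'$ then implies
\[
\left|\frac{f'(H_{n-j}(x/\rho^j))}{\rho}-\frac{f'(H(x/\rho^j))}{\rho}\right|\le \frac{\|f''\|_\infty\,g(x/\rho^j)}{\rho^{n-j+1}}.
\]
The elementary telescoping bound $|\prod_j a_j-\prod_j b_j|\le \sum_j|a_j-b_j|$, valid because every factor lies in $[-1,1]$, then yields a head discrepancy of order $n\rho^{-n/2}$. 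Combining, $H_n'\to P$ uniformly on compacts.

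Finally, since $H_n\to H$ and $H_n'\to P$ both uniformly on compacts, the classical theorem on termwise differentiation implies that $H$ is differentiable with $H'=P$, which is continuous as a uniform limit of continuous functions. The main technical obstacle is the third step: one must check that the locally bounded function $g$ stays uniformly bounded over all the arguments $x/\rho^j$ (which it does, since these arguments all lie in the compact $[0,K_0]$ whenever $x\in[0,K_0]$) and that the head and tail errors, both of order $\rho^{-n/2}$ up to polynomial factors, tend to zero at compatible rates.
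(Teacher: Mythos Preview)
Your argument is correct and is, in fact, cleaner than the paper's. The two proofs differ mainly in how they establish the uniform convergence $H_n'\to P$ on compacts. The paper splits the product at a \emph{fixed} index $J(R)$ chosen so that all arguments of $f'$ with $j>J(R)$ lie in a neighbourhood of the origin where $f'$ is bounded \emph{below}; it then works with logarithms of the factors, invokes the semiconjugacy $H(x\rho^{-j})=f^{n-j}(H(x\rho^{-n}))$ from Corollary~\ref{cor}, appeals to the local invertibility of $H$ established in Lemma~\ref{lem:8}, and finishes with Dini's theorem to upgrade pointwise to uniform convergence of $\rho^n H^{-1}(x\rho^{-n})\to x$. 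By contrast, you split at the \emph{moving} index $m=[n/2]$ and work directly with the factors (all lying in $[-1,1]$ since $\|f'\|_\infty=\rho$): the tail is close to $1$ by the geometric Lipschitz estimate, and the head discrepancy is controlled via the quantitative rate $|H_{n-j}(y)-H(y)|\le g(y)\rho^{j-n}$ from Lemma~\ref{lem:1} combined with the elementary telescoping bound $|\prod a_j-\prod b_j|\le\sum|a_j-b_j|$. Your route avoids logarithms altogether, and hence never needs a lower bound on $f'$, nor Lemma~\ref{lem:8}, nor Dini's theorem; it also yields an explicit rate $O(\rho^{-n/2})$ for the convergence $H_n'\to H'$, which the paper's argument does not directly provide. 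The paper's approach, on the other hand, makes the semiconjugacy structure more visible and reuses Lemma~\ref{lem:8}, which is needed elsewhere anyway.
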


\begin{proof}
\

\medskip

\noindent 
\underline{Step 1}.
Let us first argue that the infinite product in \eqref{limlim} 
\begin{equation}\label{G}
G(x) := \prod_{j=1}^{\infty} \frac 1 \rho f'(H(x \rho^{-j}))
\end{equation}
is well defined. By assumption \eqref{a3}, $f$ is $\rho$-Lipschitz and hence $f^n$ is $\rho^n$-Lipschitz.
Consequently, $H_n$ is $1$-Lipschitz for all $n\in \mathbb N$ and so is $H$. This will be used in the proof on several
occasions.
Let $c:=\|f''\|_\infty$ and $r:=\frac 1 2\rho/c$, then 
\begin{equation}\label{ftaglb}
f'(x) \ge \rho -cx>0, \quad \forall x\in [0,r].
\end{equation}
For $x>0$ define the function $j(x):=[\log_\rho (x/r)]$. Then for any $j>j(x)$,   
\begin{equation}\label{estim}
\begin{aligned}
\Big|\log  \frac 1 \rho f'(H(x \rho ^{-j}))\Big| = & -\log  \frac 1 \rho f'(H(x \rho ^{-j})) \le 
-\log \Big(1-\frac c \rho  H(x \rho ^{-j})\Big) \stackrel\dagger\le \\
&
 2 \frac c \rho H(x \rho ^{-j}) \le 2 \frac c \rho  x \rho ^{-j}  =: C x\rho^{-j},
\end{aligned}
\end{equation}
where $\dagger$ holds since $-\log (1-u)\le 2u$ for all $u\in [0,\frac 1 2]$. 
The partial products in \eqref{G} can be written as 
\begin{align*}
&
G_n(x) :=  \prod_{j=1}^{n} \frac 1 \rho f'(H(x \rho^{-j})) =\\
&
\left(
\prod_{j=1}^{j(x)} \frac 1 \rho f'(H(x \rho^{-j})) 
\right)
\exp \left(\sum_{j=j(x)+1}^n \log \frac 1 \rho f'(H(x \rho^{-j}))\right) =: T(x)\exp(L_n(x)).
\end{align*}
In view of the estimate \eqref{estim},  $G_n(x)$ converges to $G(x):=T(x)\exp(L(x))$ for any $x\in \Real_+$ where $L(x)=\lim_n L_n(x)$. 
Furthermore,
\begin{equation}\label{LTG}
\begin{aligned}
\big|G_n(x)-G(x)\big| = &  |T(x)| \big|\exp(L_n(x))-\exp(L(x))\big| \le \\
& \exp(L(x)\vee L_n(x))\big| L(x) -L_n(x) \big|
\end{aligned}
\end{equation}
where we used the bound $|T(x)|\le 1$.
For any $R>0$ and all $x\in [0,R]$ the estimate \eqref{estim} implies
$$
\big| L(x) -L_n(x) \big| =
\sum_{j=n+1}^\infty \big|\log \frac 1 \rho f'(H(x \rho^{-j}))\big| \le \sum_{j=n+1}^\infty C x\rho^{-j} \le C R \frac{\rho^{-n}}{\rho-1},
$$
and thus, in view of the bound \eqref{LTG}, we obtain
\begin{equation}\label{GGn}
\sup_{x\le R} \big|G_n(x)-G(x)\big|\to 0.
\end{equation}
Since $G_n$ is continuous for any $n$, this uniform convergence implies that $G$ is continuous as well.

\medskip
\noindent
\underline{Step 2.}
To show that $H(x)$ is differentiable and to verify the claimed formula for the derivative,
it remains to show that the sequence of derivatives
$$
H_n'(x)  = \prod_{j=1}^n \frac 1 \rho f'(f^{n-j}(x/\rho^n))
$$
converges to $G$ uniformly on compacts. Fix an $R>0$, define $J(R)=[\log_\rho (R/r)]$ and, for $n>J(R)$, let
$$
\widetilde P_n(x) := \prod_{j=1}^{J(R)} \frac 1 \rho f'(f^{n-j}(x/\rho^n)),\quad
P_n(x):= \prod_{j=J(R)+1}^n \frac 1 \rho f'(f^{n-j}(x/\rho^n))
$$
and 
$$
\widetilde Q_n(x):= \prod_{j=1}^{J(R)} \frac 1 \rho f'(H(x \rho^{-j})),
\quad
Q_n(x):= \prod_{j=J(R)+1}^n \frac 1 \rho f'(H(x \rho^{-j})).
$$
Since $\|f'\|_\infty=\rho$ all these functions are bounded by 1 and 
\begin{align*}
&
\big|H_n'(x) - G(x)\big|\le 
\big|H_n'(x) - G_n(x)\big|
+
\big|G_n(x) - G(x)\big| = \\
&
\big|\widetilde P_n(x)  P_n(x) - \widetilde Q_n(x) Q_n(x)\big|
+
\big|G_n(x) - G(x)\big| \le \\
&
\big|  P_n(x)-  Q_n(x) \big| 
+
\big|\widetilde P_n(x)   - \widetilde Q_n(x)\big| 
+
\big|G_n(x) - G(x)\big|. 
\end{align*}
Since $f'$ is continuous and the convergence $H_n\to H$ is uniform on compacts, it follows that 
$$
\sup_{x\le R}\big|\widetilde P_n(x)   - \widetilde Q_n(x)\big|
= \sup_{x\le R}\left|
\prod_{j=1}^{J(R)} \frac 1 \rho f'(H_{n-j}(x\rho^{-j}))
-
\prod_{j=1}^{J(R)} \frac 1 \rho f'(H(x \rho^{-j}))
\right| 
\xrightarrow[n\to\infty]{}0,
$$
and hence, to complete the proof, we need to show that 
\begin{equation}
\label{need2}
\sup_{x\le R}\big|  P_n(x)   -   Q_n(x)\big|\xrightarrow[n\to\infty]{}0.
\end{equation} 
To this end, in view of Corollary \ref{cor},
\begin{align*}
H(x\rho^{-j}) =\, & f\circ H(x\rho^{-j-1})) = f^2\circ H(x\rho^{-j-2})) = ... =\\
& f^{n-j}\circ H(x\rho^{-j-(n-j)})= 
f^{n-j}\circ H(x\rho^{ - n })
\end{align*}
and hence    
$$
P_n(x) - Q_n(x) = 
\prod_{j=J(R)+1}^n \frac 1 \rho f'(f^{n-j}(x \rho^{-n})) - \prod_{j=J(R)+1}^n \frac 1 \rho f'\big(f^{n-j}\big(H(x \rho^{-n})\big)\big).
$$
Consequently, for all $x\in (0,R]$, 
\begin{equation}\label{longeq}
\begin{aligned}
&
\big|\log P_n(x) - \log Q_n(x)\big| \le \\
&
\sum_{j=J(R)+1}^n \Big| \log \frac 1 \rho f'(f^{n-j}(x\rho^{-n})) -   \log\frac 1 \rho f'\big(f^{n-j}\big(H(x\rho^{ - n })\big)\big)\Big| \stackrel\dagger\le \\
&
\frac 1 {\rho - cr}  \|f''\|_\infty\sum_{j=J(R)+1}^n 
\big|    f^{n-j}(x\rho^{-n})  -    f^{n-j}\big(H(x\rho^{ - n })\big) \big| \le \\
&
\frac 1 {\rho - cr}  \|f''\|_\infty
\sum_{j=1}^n \rho^{ n-j}\big|     x\rho^{-n}    -       H(x\rho^{ - n })  \big|  \le 
C  \rho^n \big|   \rho^{-n}  x    -      H(x\rho^{ - n })  \big| = \\
&
C \rho^n \big|  H\circ  H^{-1}( x\rho^{-n} )    -      H(x\rho^{ - n })  \big| \stackrel \ddagger\le 
C  \big|  \rho^n H^{-1}( x\rho^{-n} )    -       x    \big|.
\end{aligned}
\end{equation}
Here the bound $\dagger$ holds since for $j> J(R)$ both arguments of $f'$ are smaller than $r$ 
and thus \eqref{ftaglb} applies. 
The inequality $\ddagger$ is true since $H$ is $1$-Lipschitz. 
The inverses in the last line of \eqref{longeq} are well defined for $n\ge k:=[\log_\rho (R/H(a))]+1$
where $a$ is the constant guaranteed by Lemma \ref{lem:8}. Moreover, for all such $n$
\begin{align}\label{conv}
& \big|  \rho^n H^{-1}(  x\rho^{-n})    -       x    \big| =
\rho^k \big|  \rho^{n-k} H^{-1}(  x\rho^{-k}\rho^{-(n-k)})    -       x\rho^{-k}    \big| =\\
&\nonumber
\rho^k \big|   H^{-1}\circ f^{n-k}( x\rho^{-k}\rho^{-(n-k)} )    -       x \rho^{-k}    \big| = 
\rho^k \big|   H^{-1}\circ H_{n-k}(x\rho^{-k})    -       x \rho^{-k}   \big| \xrightarrow[n\to\infty]{}0.
\end{align}
Moreover, the sequence of functions $D_n(x) := \rho^n H^{-1}(  x\rho^{-n})$ is decreasing on $[0,R]$ for all $n$ large enough:
$$
D_{n+1}(x) = \rho^n \rho H^{-1}(  x\rho^{-n-1}) = \rho^n   H^{-1}\circ f (  x\rho^{-n-1})\le 
\rho^n   H^{-1} (  x\rho^{-n }) = D_n(x),
$$
where the inequality holds since $H^{-1}$ is increasing near the origin. 
It follows now from Dini's theorem that the convergence in \eqref{conv} is uniform:
$$
\sup_{x\le R} \big|  \rho^n H^{-1}( x\rho^{-n} )    -       x    \big|\xrightarrow[n\to\infty]{}0.
$$
The convergence in \eqref{need2} holds since both $Q_n$ and $P_n$ are bounded by 1 and
$$
\sup_{x\le R}\big|  P_n(x)   -   Q_n(x)\big| \le \sup_{x\le R}\big|P_n(x) \vee  Q_n(x)\big| \sup_{x\le R}\big|  \log P_n(x)   -  \log Q_n(x)\big|
\xrightarrow[n\to\infty]{}0.
$$
\end{proof}

\begin{lemma}
$H$ has continuous second derivative 
\begin{equation}\label{H2d}
H''(x) = 
H'(x) \sum_{i=1}^\infty \frac {f''(H(x \rho^{-i}))}{ f'(H(x \rho^{-i}))}  H'(x \rho^{-i})\rho^{-i}.
\end{equation}
\end{lemma}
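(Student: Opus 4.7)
The plan is to obtain formula \eqref{H2d} by logarithmic differentiation of the product formula
$$H'(x) = \prod_{j=1}^\infty \frac{1}{\rho} f'(H(x\rho^{-j}))$$
established in Lemma \ref{lem:2}. Formally, differentiating $\log H'(x)$ term-by-term in $x$ yields exactly the claimed expression for $H''(x)/H'(x)$. To make this rigorous I would follow the structure of the proof of Lemma \ref{lem:2}: first establish uniform convergence on compacts of the candidate series, then differentiate the finite-level approximants $H_n'$ and show that $H_n''$ converges uniformly on compacts, and finally invoke the standard criterion that a uniformly convergent sequence of $C^1$ functions with uniformly convergent derivatives has a $C^1$ limit whose derivative is the uniform limit of the derivatives.

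The first step is to show that
$$S(x) := \sum_{i=1}^\infty \frac{f''(H(x\rho^{-i}))}{f'(H(x\rho^{-i}))} H'(x\rho^{-i}) \rho^{-i}$$
converges uniformly on each compact $[0,R]$. Every $H_n$ is $1$-Lipschitz with $H_n(0)=0$, so $H$ inherits these properties, giving $H(x\rho^{-i}) \le R\rho^{-i}$ for $x \in [0,R]$. Continuity of $f'$ together with $f'(0)=\rho$ then produces an index $i_0 = i_0(R)$ such that $f'(H(x\rho^{-i})) \ge \rho/2$ for all $i \ge i_0$ and $x \in [0,R]$. Combined with the bound $|H'|\le 1$ (from $H$ being $1$-Lipschitz) and $\|f''\|_\infty<\infty$ from assumption \eqref{a3}, the $i$-th term is bounded by $C\rho^{-i}$ uniformly on $[0,R]$ for $i \ge i_0$, so the tail converges uniformly and $S$ is continuous on $[0,R]$.

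The second step is to repeat the calculation at the level of the $H_n$. Iterating the recursion $H_n(x) = f(H_{n-1}(x/\rho))$ and differentiating yields
\begin{align*}
H_n'(x) &= \prod_{j=1}^n \frac{f'(H_{n-j}(x\rho^{-j}))}{\rho}, \\
H_n''(x) &= H_n'(x) \sum_{j=1}^n \frac{f''(H_{n-j}(x\rho^{-j}))}{f'(H_{n-j}(x\rho^{-j}))} H_{n-j}'(x\rho^{-j}) \rho^{-j},
\end{align*}
wherever the denominators do not vanish. Uniform convergence on compacts of $H_m \to H$ and $H_m' \to H'$ (Lemmas \ref{lem:1} and \ref{lem:2}), together with continuity of $f'$ and $f''$ and the tail estimate from Step 1 (which applies equally with $H_{n-j}$ in place of $H$, since the $H_m$ are uniformly $1$-Lipschitz), would then show $H_n'' \to H' \cdot S =: G$ uniformly on compacts. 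The standard criterion applied to $f_n := H_n'$ yields $(H')' = G$, which is exactly \eqref{H2d}, and continuity of $H''$ is immediate from the uniform convergence of the continuous $H_n''$.

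The main obstacle lies in the first $i_0(R)$ terms of the partial sum for $H_n''$, for which the denominator $f'(H_{n-j}(x\rho^{-j}))$ is not controlled by the small-argument expansion of $f'$ and could in principle vanish. For those finitely many indices one proceeds by direct pointwise-to-uniform continuity of $f''/f'$, provided the orbit $\{H_m(x\rho^{-j})\}$ does not hit a zero of $f'$. This is automatic on a neighborhood of the origin by the estimate underlying Lemma \ref{lem:8}, so \eqref{H2d} is established at least locally; the same argument extends to any region on which the infinite product defining $H'$ consists of strictly positive factors.
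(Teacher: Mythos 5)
The student takes a genuinely different route from the paper, and it matters. The paper forms the sequence
$G_n(x) = \prod_{j=1}^n \tfrac{1}{\rho}f'(H(x\rho^{-j}))$
of \emph{partial products of the already-established infinite-product formula for} $H'$ from Lemma \ref{lem:2}, with $H$ itself inside every factor, and differentiates these. Because the factors do not depend on $n$, $G_n'$ is a genuine partial sum of a single fixed series, and the compact convergence $G_n \to H'$ is already available from Lemma \ref{lem:2}; one only needs the tail estimate coming from $f''/f'$ being bounded near the origin. You instead propose to differentiate the derivatives $H_n'$ of the finite iterates $H_n = f^n(\cdot/\rho^n)$, whose factors involve $H_{n-j}$, not $H$. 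That introduces a double limit (index $j$ and level $n-j$ going to infinity simultaneously) that the paper avoids entirely, forcing you to re-invoke the uniform convergences $H_m\to H$ and $H_m'\to H'$ term by term. Your tail estimate is the same as the paper's and is fine; the route is just longer.

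There is also a substantive gap. Your last paragraph concedes that the finitely many small-$j$ terms are only controlled "on a neighborhood of the origin" or "on any region on which the infinite product defining $H'$ consists of strictly positive factors," so what you actually prove is a local version of the lemma, whereas the lemma (and the paper's proof) assert the global statement on $\Real_+$. The paper avoids the issue by carrying out the convergence argument on the \emph{product-rule} form of $G_n'$, namely $\sum_i \bigl(\prod_{j\ne i}\tfrac1\rho f'(H(x\rho^{-j}))\bigr)\tfrac1\rho f''(H(x\rho^{-i}))H'(x\rho^{-i})\rho^{-i}$, in which no division by $f'$ ever appears; the factored expression in \eqref{H2d} with $f''/f'$ is then just a rewriting under the convention $0/0=0$. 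If you keep your route via $H_n''$, you should likewise work with the product-rule expansion rather than the logarithmic-derivative form, which removes the denominator and lets you conclude globally.
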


\begin{proof}
The partial products in \eqref{limlim}  
$$
G_n(x): =\prod_{j=1}^n \frac 1 \rho f'(H(x \rho^{-j}))
$$
satisfy 
\begin{align*}
G'_n(x) = & \sum_{i=1}^n\left(\prod_{j=1, j\ne i}^n \frac 1 \rho f'(H(x \rho ^{-j}))\right)
\frac 1 \rho f''(H(x \rho^{-i}))H'(x \rho^{-i}) \rho ^{-i}= \\
&
G_n(x) \sum_{i=1}^n \frac {f''(H(x \rho^{-i}))}{ f'(H(x \rho^{-i}))}  H'(x \rho^{-i}) \rho^{-i},
\end{align*}
where the convention $0/0=0$ is used.
By assumption \eqref{a3}, $f''/f'$ is bounded uniformly on a vicinity of the origin. 
$H'$ is continuous  by Lemma \ref{lem:2} and therefore is bounded on compacts. Hence the series is compactly convergent. 
By Lemma \ref{lem:2}, so is $G_n$. Thus $G'_n(x)$ converges compactly, its limit is continuous and coincides with $H''(x)$.  
\end{proof}

\subsection{The auxiliary Galton-Watson process}\label{sec:3.2}
 
Let $(U_{n,j}: n\in \mathbb N, j\in \mathbb Z_+)$ be an array of i.i.d. $U([0,1])$ random variables and 
define   
$$
\xi_{n,j}(x) = F^{-1}_x(U_{n,j}) := \min\big\{t\ge 0: F_x(t)\ge U_{n,j}\big\},
$$
where $F_x(t)$ is the offspring distribution function when the population density is $x$, cf. assumption \eqref{a1}. 
Then $\P(\xi_{n,j}(x)=k)=p_k(x)$ for all $k\in \mathbb Z_+$.
Let $\eta_{n,j}:=\xi_{n,j}(0)$.
By assumption \eqref{a1} 
\begin{equation}\label{xieta}
\xi_{n,j}(x) \le \eta_{n,j}\quad \forall x\in \Real_+,\ n,j\in \mathbb N.
\end{equation}
Let $Z=(Z_n, n\in \mathbb Z_+)$ and $Y=(Y_n, n\in \mathbb Z_+)$ be processes generated by the recursions 
$$
Z_n =    \sum_{j=1}^{Z_{n-1}}\xi_{n,j}(\overline Z_{n-1})\quad \text{and}\quad
Y_n =   \sum_{j=1}^{Y_{n-1}} \eta_{n,j}
$$
started from the same initial conditions $Z_0=Y_0=1$. By construction these processes coincide in distribution with \eqref{Z} and 
\eqref{Y} respectively. Moreover, in view of \eqref{xieta}, by induction 
\begin{equation}\label{ZYineq}
Z_n \le Y_n, \quad \forall n\in \mathbb Z_+.
\end{equation}

\subsection{The approximation}\label{sec-T}

In view of  \eqref{H},
$$
 \overline Y_{n_1} - W\rho^{-\{\log_\rho K\}}  =   \rho^{-\{\log_\rho K\}}\big(\rho^{-n_1} Y_{n_1}- W\big)
 =O_\P(\rho^{-n_1/2}) = O_\P(K^{-1/2}).
$$
Since $H$ has continuous derivative it follows that 
$$
H( \overline Y_{n_1})- H(W\rho^{-\{\log_\rho K\}}) = O_\P(K^{-1/2}).
$$
Thus to prove the assertion of Theorem \ref{thm:main} it remains to show that 
$$
 \overline Z_{n_1}  - H(\overline Y_{n_1}) = O_\P(K^{-1/2}\log K), \quad K\to \infty.
$$
The process $\overline Y_n = K^{-1} Y_n$ satisfies 
$$
\overline Y_n = \rho \overline Y_{n-1} + \frac 1 K \sum_{j=1}^{Y_{n-1}}(\eta_{n,j}-\rho).
$$
By Taylor's approximation and in view of Corollary \ref{cor}
\begin{equation}\label{HYn}
\begin{aligned}
H(\overline Y_n) =\, & H(\rho \overline Y_{n-1}) + H'(\rho \overline Y_{n-1})\frac 1 K \sum_{j=1}^{Y_{n-1}}(\eta_{n,j}-\rho)
+ R_n(K) =\\
&
f(H( \overline Y_{n-1})) + H'(\rho \overline Y_{n-1})\frac 1 K \sum_{j=1}^{Y_{n-1}}(\eta_{n,j}-\rho)
+ R_n(K)
\end{aligned}
\end{equation}
where 
\begin{equation}\label{RnK}
R_n(K) := \frac 1 2 H''(\theta_{n-1}(K))\left(\frac 1 K \sum_{j=1}^{Y_{n-1}}(\eta_{n,j}-\rho)\right)^2
\end{equation}
with $\theta_{n-1}(K)\ge 0$ satisfying 
\begin{equation}\label{theta}
\big|\theta_{n-1}(K) - \rho \overline Y_{n-1}\big|\le \left|\frac 1 K \sum_{j=1}^{Y_{n-1}}(\eta_{n,j}-\rho)\right|.
\end{equation}
Since $\|f'\|_\infty= \rho$ is assumed, $f$ is $\rho$-Lipschitz. By subtracting equation  \eqref{Zbar} from \eqref{HYn} we obtain 
the bound for $\delta_n := | H(\overline Y_n)-\overline Z_n|$:
\begin{equation}\label{delta}
\delta_n   \le \rho \delta_{n-1}
+ \big|\eps^{(1)}_n\big|
+ \big|\eps^{(2)}_n\big|
+\big|\eps^{(3)}_n\big|
+ |R_n(K)|
\end{equation}
subject to $\delta_0 = |H(1/K)-1/K|$, where we defined
\begin{align*}
\eps^{(1)}_n & = \Big(H'(\rho \overline Y_{n-1})-1\Big)\frac 1 K \sum_{j=1}^{Y_{n-1}}(\eta_{n,j}-\rho), \\
\eps^{(2)}_n & = \frac 1 K \sum_{j=1}^{Z_{n-1}}\Big((\eta_{n,j}-\rho) -   (\xi_{n,j}(\overline Z_{n-1})-m(\overline Z_{n-1}))\Big),\\
\eps^{(3)}_n & = \frac 1 K \sum_{j=Z_{n-1}+1}^{Y_{n-1}}(\eta_{n,j}-\rho).
\end{align*}
Consequently,  
$$
\delta_n \le \rho^n \delta_0 + \sum_{j=1}^n \rho^{n-j} \Big(\big|\eps^{(1)}_j\big|
+ \big|\eps^{(2)}_j\big|
+\big|\eps^{(3)}_j\big|
+ |R_j(K)|
\Big) 
$$
and it is left to show that the contribution of each term at time $n_1=[\log_ \rho K]$ is of order $O_\P(K^{-1/2}\log K)$
as $K\to\infty$.

\medskip

\subsubsection{Contribution of the initial condition}
Since $H(0)=0$ and, by \eqref{limlim}, $H'(0)=1$, Taylor's approximation implies that for all $K$ large enough
$$
\delta_0  = |H(1/K)-1/K| \le \frac 1 2\sup_{x\le 1}|H''(x)| K^{-2} = CK^{-2}
$$
and, consequently, 
$
|\rho^{n_1} \delta_0| \le CK^{-1}.
$
\medskip

\subsubsection{Contribution of $R_n(K)$}\label{subs:R}
To estimate the residual, defined in \eqref{RnK}, let us show first that the family of random variables
\begin{equation}\label{htt}
\max_{m\le n_1} \Big|H''(\theta_m(K))\Big|
\end{equation}
is bounded in probability as $K\to\infty$. By equation \eqref{theta},
\begin{align*}
&
\E  \theta_{n-1}(K)  \le\  
\rho\E  \overline Y_{n-1}  + \E\left| \frac 1 K \sum_{j=1}^{Y_{n-1}} \big(\eta_{n,j}-\rho\big)\right|\le \\
&
  \rho \E \overline Y_{n-1} + \frac 1 K \sqrt{\E Y_{n-1} \sigma^2(0)}\le 
\frac 1 K\rho^n + \frac 1 K \sqrt{ \rho^{ n} \sigma^2(0)} \le \\
&   K^{-1} \rho^n + C K^{-1}\rho^{n/2} \le  2CK^{-1} \rho^n.
\end{align*}
If $H''$ is bounded then \eqref{htt} is obviously bounded. 
Let us proceed assuming that $H''$ is unbounded. Define 
$
\psi(M):= \max_{x\le M} |H''(x)|.
$ 
By continuity, $\psi(M)$ is finite, continuous and increases to $\infty$. 
Let  $\psi^{-1}$ be its generalized inverse 
$$
\psi^{-1}(t)=\inf\{x\ge 0: \psi(x)\ge t\}.
$$
Since $\psi$ is continuous and unbounded, $\psi^{-1}$ is nondecreasing (not necessarily continuous) and 
$\psi^{-1}(t)\to \infty$ as $t\to\infty$.
Then for any $R\ge 0$, by the union bound,
\begin{align*}
&
\P \Big(\max_{m\le n_1} |H''(\theta_m(K)|\ge R\Big)\le 
\P \Big(\max_{m\le n_1} \psi(\theta_m(K))\ge R\Big) \le \\
&
\sum_{m=1}^{n_1} \P \Big( \psi(\theta_m(K))\ge R\Big)
\le 
\sum_{m=1}^{n_1} \P \Big(   \theta_m(K) \ge \psi^{-1}(R)\Big) \le \\
&
\sum_{m=1}^{n_1} \frac{\E\theta_m(K)} { \psi^{-1}(R)} \le 
\frac {1} { \psi^{-1}(R)} \sum_{m=1}^{n_1}  2C K^{-1} \rho^m   \le  \frac {\rho}{\rho-1}\frac {2C} { \psi^{-1}(R)}\xrightarrow[R\to\infty]{}0.
\end{align*}
This proves that \eqref{htt} is bounded in probability. 
The contribution of $R_n(K)$ in \eqref{delta} can now be bounded as  
\begin{align*}
\left|\sum_{m=1}^{n_1} \rho^{n_1-m} R_m(K)\right| \le 
\max_{j\le n_1}\Big| H''(\theta_j(K))\Big|
 \sum_{m=1}^{n_1} \rho^{n_1-m} 
\left(\frac 1 K \sum_{j=1}^{Y_{m-1}}(\eta_{m,j}-\rho)\right)^2
\end{align*}
where 
\begin{align*}
&
\E  \sum_{m=1}^{n_1} \rho^{n_1-m} 
\left(\frac 1 K \sum_{j=1}^{Y_{m-1}}(\eta_{m,j}-\rho)\right)^2
=\\
&
\sum_{m=1}^{n_1} \rho^{n_1-m}  \frac 1 {K^2} \E Y_{m-1} \sigma^2(0)  \le 
\sum_{m=1}^{n_1} \rho^{n_1-m}  \frac 1 {K^2} \rho^{m} \sigma^2(0) \le 
C K^{-1}\log K.
\end{align*}
Hence 
$$
\left|\sum_{m=1}^{n_1} \rho^{n_1-m} R_m(K)\right| = O_\P(1) O_\P(K^{-1} \log K) = O_\P(K^{-1} \log K).
$$

\subsubsection{Contribution of $\eps^{(3)}$}
By conditional independence of $\eta_{n,j}$'s 
$$
\E\big(\eps^{(3)}_m\big)^2 =  \frac {\sigma^2(0)} {K } \E (\overline Y_{m-1}-\overline Z_{m-1}) .
$$
In view of \eqref{ZYineq}, the sequence $D_m := \overline Y_{m}-\overline  Z_{m}\ge 0$ satisfies 
\begin{align*}
\E D_m =\, & \frac 1 K\E \left(\sum_{j=1}^{Y_{m-1}}\eta_{m,j}-\sum_{j=1}^{Z_{m-1}}\xi_{m,j}(\overline Z_{m-1})\right) =\\
&
\frac 1 K\E  \sum_{j=Z_{m-1}+1}^{Y_{m-1}}\eta_{m,j}+\frac 1 K\E \sum_{j=1}^{Z_{m-1}}\big(\eta_{m,j}-\xi_{m,j}(\overline Z_{m-1})\big) =\\
&
\rho \E D_{m-1} + \frac 1 K\E\sum_{j=1}^{Z_{m-1}} \big(\rho-m(\overline Z_{m-1})\big) =\\
&
\rho \E D_{m-1} +   \E \overline Z_{m-1} \big(\rho-m(\overline Z_{m-1}) \big) =  \\
&
\rho \E D_{m-1} +  \E \big(\rho\overline Z_{m-1}-f(\overline Z_{m-1})\big) \le \\
&
\rho \E D_{m-1} +  \frac 1 2 \|f''\|_\infty \E \overline Z_{m-1}^2 \le \\
&
\rho \E D_{m-1} + C K^{-2}  \rho^{2m},
\end{align*} 
where the last bound holds in view of \eqref{ZYineq} and the well known formula 
for the second moment of the Galton-Watson process.
Since $D_0=0$ it follows that 
$$
\E D_m \le  \sum_{\ell=1}^m \rho^{m-\ell} C K^{-2}  \rho^{2\ell} \le C K^{-2}\rho^{2m}.
$$
Hence the contribution of $\eps^{(3)}$ in \eqref{delta} is bounded by   
\begin{align*}
\E \Big|\sum_{m=1}^{n_1} \rho^{n_1-m} \eps^{(3)}_m\Big| \le\, & C \sum_{m=1}^{n_1} \rho^{n_1-m} K^{-1/2} \sqrt{\E D_m}  \le \\
&
C \sum_{m=1}^{n_1} \rho^{n_1-m} K^{-1/2} K^{-1}\rho^{ m}
\le C K^{-1/2} \log K.
\end{align*}

\subsubsection{Contribution of $\eps^{(2)}$}
By assumption \eqref{a2},
\begin{align*}
\E \big(\eps^{(2)}_m\big)^2 =\, &
 K^{-2}\E  \sum_{j=1}^{Z_{m-1}} \big( \eta_{m,j} -\xi_{m,j}(\overline Z_{m-1}) - \big(\rho   -m(\overline Z_{m-1})\big) \big)^2 \le \\
&
 K^{-2}\E  \sum_{j=1}^{Z_{m-1}} \big(\eta_{m,j} -\xi_{m,j}(\overline Z_{m-1})\big)^2  \stackrel\dagger{\le}
 K^{-2}\E  \sum_{j=1}^{Z_{m-1}} (m_2(0)  - m_2(\overline Z_{m-1})\big) \le \\
 &
K^{-2}\E  \sum_{j=1}^{Z_{m-1}} L \overline Z_{m-1}  \le C K^{-3}\rho^{2m}
\end{align*}
where $\dagger$ holds by \eqref{xieta}.
Hence $\eps^{(2)}$ contributes 
$$
\E \Big|\sum_{m=1}^{n_1} \rho^{n_1-m} \eps^{(2)}_m\Big| \le  
C \sum_{m=1}^{n_1} \rho^{n_1-m}  K^{-3/2}\rho^{ m} \le C K^{-1/2}\log K.
$$

\subsubsection{Contribution of $\eps^{(1)}$}
The function
$
g(x): = H'(x)-1
$
is continuously differentiable  with $g(0)=0$ and thus Taylor's approximation gives
$$
\eps^{(1)}_n =\,  
g(\rho \overline Y_{n-1})\frac 1 K \sum_{j=1}^{Y_{n-1}}(\eta_{n,j}-\rho) =
g'(\zeta_{n-1}(K)) \rho \overline Y_{n-1}\frac 1 K \sum_{j=1}^{Y_{n-1}}(\eta_{n,j}-\rho)  
$$
where 
$
\zeta_{n-1}(K)
$
satisfies $0\le \zeta_{n-1}(K)\le \rho \overline Y_{n-1}$.
Here 
\begin{align*}
&
\E\Big|\overline Y_{n-1}\frac 1 K \sum_{j=1}^{Y_{n-1}}(\eta_{n,j}-\rho)\Big|  \le \\
&
\left(\E \big(\overline Y_{n-1}\big)^2\right)^{1/2} 
\left(
\E \Big(\frac 1 K \sum_{j=1}^{Y_{n-1}}(\eta_{n,j}-\rho)\Big)^2 
\right)^{1/2}\le \\
&
\big(K^{-2}\rho^{2n})^{1/2}
\Big(
K^{-2} \E Y_{n-1} \sigma^2(0)
\Big)^{1/2} \le  C K^{-2} \rho^{3/2n}.
\end{align*}
It follows that  
$$
\E \sum_{m=1}^{n_1} \rho^{n_1-m} \Big|\overline Y_{m-1}\frac 1 K \sum_{j=1}^{Y_{m-1}}(\eta_{m,j}-\rho)\Big| \le  \sum_{m=1}^{n_1} \rho^{n_1-m} C K^{-2} \rho^{3/2m} \le C K^{-1/2}.
$$
It is then argued as in Subsection \ref{subs:R} that  
$$
 \sum_{m=1}^{n_1} \rho^{n_1-m} \eps^{(1)}_m = O_\P(1) O_\P(K^{-1/2}) =O_\P(K^{-1/2}).
$$

\subsection*{Acknowledgements}

The research was supported by ARC grant DP220100973.


\begin{thebibliography}{10}

\bibitem{ANbook}
K.~B. Athreya and P.~E. Ney.
\newblock {\em Branching processes}.
\newblock Die Grundlehren der mathematischen Wissenschaften, Band 196.
  Springer-Verlag, New York-Heidelberg, 1972.

\bibitem{BChJK20}
J.~Baker, P.~Chigansky, P.~Jagers, and F.~C. Klebaner.
\newblock On the establishment of a mutant.
\newblock {\em J. Math. Biol.}, 80(6):1733--1757, 2020.

\bibitem{BChK}
A.~D. Barbour, P.~Chigansky, and F.~C. Klebaner.
\newblock On the emergence of random initial conditions in fluid limits.
\newblock {\em J. Appl. Probab.}, 53(4):1193--1205, 2016.

\bibitem{ChJK18}
P.~Chigansky, P.~Jagers, and F.~C. Klebaner.
\newblock What can be observed in real time {PCR} and when does it show?
\newblock {\em J. Math. Biol.}, 76(3):679--695, 2018.

\bibitem{ChJK19}
P.~Chigansky, P.~Jagers, and F.~C. Klebaner.
\newblock Populations with interaction and environmental dependence: from few,
  (almost) independent, members into deterministic evolution of high densities.
\newblock {\em Stoch. Models}, 35(2):108--118, 2019.

\bibitem{general}
P.~Haccou, P.~Jagers, and V.~A. Vatutin.
\newblock {\em Branching processes: variation, growth, and extinction of
  populations}, volume~5 of {\em Cambridge Studies in Adaptive Dynamics}.
\newblock Cambridge University Press, Cambridge, IIASA, Laxenburg, 2007.

\bibitem{HJK}
K.~Hamza, P.~Jagers, and F.~C. Klebaner.
\newblock On the establishment, persistence, and inevitable extinction of
  populations.
\newblock {\em J. Math. Biol.}, 72(4):797--820, 2016.

\bibitem{H70}
C.~C. Heyde.
\newblock A rate of convergence result for the super-critical {G}alton-{W}atson
  process.
\newblock {\em J. Appl. Probability}, 7(2):451--454, 1970.

\bibitem{Hog}
G.~H\"{o}gn\"{a}s.
\newblock On the quasi-stationary distribution of a stochastic {R}icker model.
\newblock {\em Stochastic Process. Appl.}, 70(2):243--263, 1997.

\bibitem{HJ}
G.~H\"{o}gn\"{a}s.
\newblock On the lifetime of a size-dependent branching process.
\newblock {\em Stoch. Models}, 35(2):119--131, 2019.

\bibitem{Jung}
B.~Jung.
\newblock Exit times for multivariate autoregressive processes.
\newblock {\em Stochastic Process. Appl.}, 123(8):3052--3063, 2013.

\bibitem{heur1}
D.~G. Kendall.
\newblock Deterministic and stochastic epidemics in closed populations.
\newblock In {\em Proceedings of the {T}hird {B}erkeley {S}ymposium on
  {M}athematical {S}tatistics and {P}robability, 1954--1955, vol. {IV}}, pages
  149--165, Berkeley-Los Angeles, Calif., 1956. University of California Press.

\bibitem{LDP1}
F.~C. Klebaner, J.~Lazar, and O.~Zeitouni.
\newblock On the quasi-stationary distribution for some randomly perturbed
  transformations of an interval.
\newblock {\em Ann. Appl. Probab.}, 8(1):300--315, 1998.

\bibitem{KN94}
F.~C. Klebaner and O.~Nerman.
\newblock Autoregressive approximation in branching processes with a threshold.
\newblock {\em Stochastic Process. Appl.}, 51(1):1--7, 1994.

\bibitem{LDP}
F.~C. Klebaner and O.~Zeitouni.
\newblock The exit problem for a class of density-dependent branching systems.
\newblock {\em Ann. Appl. Probab.}, 4(4):1188--1205, 1994.

\bibitem{heur3}
J.~A.~J. Metz.
\newblock The epidemic in a closed population with all susceptibles equally
  vulnerable; some results for large susceptible populations and small initial
  infections.
\newblock {\em Acta Biotheoretica}, 27(1):75--123, Mar 1978.

\bibitem{heur2}
P.~Whittle.
\newblock The outcome of a stochastic epidemic---a note on {B}ailey's paper.
\newblock {\em Biometrika}, 42(1-2):116--122, 1955.

\end{thebibliography}

\end{document}